\def\ZZ{{\mathbb Z}}
\def\RR{{\mathbb R}}
\def\CC{{\mathbb C}}
\def\NN{{\mathbb N}}
\def\M{{\mathcal M}}
\def\S{{\mathcal S}}
\def\R{{\mathcal R}}
\def\C{{\mathcal C}}
\newtheorem{theorem}{Theorem}[section]
\begin{document}

\title{\huge On vector configurations
that can be realized 
in the cone of positive matrices} 

\author{{\bf P\'eter E.\ Frenkel\footnote{Supported by FNS and by OTKA grants  K 61116 and NK 72523}}
\\
Department of Mathematics, University of Geneva
\\
2-4 rue du Li\`evre, 1211 Geneve 4, Switzerland
\\
email: {\tt{frenkelp@renyi.hu}} \vspace{1cm}
\\
{\bf Mih\'aly Weiner\footnote{Supported by the ERC Advanced Grant 227458 OACFT ``Operator Algebras
and Conformal Field Theory''}} \\
Department of Mathematics, University of Rome ``Tor
Vergata"
\\
Via della Ricerca Scientifica, I-00133 Roma, Italy
\\
(on leave from: 
Alfr\'ed R\'enyi Institute of Mathematics \\
H-1053 Budapest, POB 127, Hungary) \\
email: {\tt{mweiner@renyi.hu}}
}
\date{}


\maketitle

\begin{abstract} 
Let ${\mathbf v}_1,\ldots ,{\mathbf v}_n$ be $n$ vectors 
in an inner product space. Can we find a $d\in \NN$ and positive 
(semidefinite) matrices $A_1,\ldots,A_n\in M_d(\CC)$ such that ${\rm 
Tr}(A_kA_l)= \langle {\mathbf v}_k,{\mathbf v}_l\rangle$ for all 
$k,l=1,\dots, n$? For  such matrices to exist, one 
must have $\langle {\mathbf v}_k,{\mathbf v}_l\rangle \geq 0$ for all 
$k,l=1,\dots, n$.  We prove that if $n<5$ then this trivial necessary 
condition is also a sufficient one and find an appropriate example showing 
that from $n=5$ this is not so --- even if we allowed realizations by 
positive operators in a von Neumann algebra with a faithful normal tracial 
state. 

The fact that the first such example occurs at $n=5$ is similar to 
what one has in the well-investigated problem of 
{\it positive factorization} of positive 
(semidefinite) matrices. If the matrix $\big(\langle \mathbf v_k,\mathbf v_l\rangle \big)_{(k,l)}$
has a positive factorization, then matrices $A_1$, \dots, $A_n$ as above exist.
However, as we show by a large class of examples constructed with the 
help of the Clifford algebra, the converse implication  is false.
\end{abstract}

\section{Introduction}

\subsection{Motivation}
Throughout this paper, the term ``positive matrix''  will mean ``positive semidefinite matrix''.
The aim of the paper is to study a geometrical property of the cone 
$\C_d$ of positive  matrices in $M_d(\CC)$ in the ``large 
dimensional limit'': we investigate if a given configuration of vectors 
can be embedded in $\C_d$ for {\it some} (possibly very large) $d\in\NN$. 
Note that for $d_1\leq d_2$ we have $\C_{d_1}\hookrightarrow \C_{d_2}$ in a natural 
manner, so if a configuration can be embedded in $\C_{d_1}$, then it can be embedded in $\C_{d_2}$.

To explain the precise meaning of our question, consider the real 
vector space formed by the self-adjoint elements of $M_d(\CC)$. 
It has a natural inner product defined by the formula
\begin{equation}
\langle A, B\rangle \equiv \frac 1d{\rm Tr}(AB) \;\;\;\;\; 
(A^*=A, \, B^*=B\, \in \, M_d(\CC)),
\end{equation}
making it a {\it Euclidean} space. Our cone $\C_d$ is a convex 
cone in this space with a ``sharp end-point'' at 
zero\footnote{Actually one has the much stronger property 
that an element $X$ in this space belongs to the cone $\C_d$
if and only if $\langle X,A\rangle\geq 0$ for all $A\in \C_d$; 
i.e.\! $\C_d$ is a {\it self-dual} cone.}: if 
$A,B\in \C_d$ then $\langle A, B \rangle \geq 0$.

Suppose we are given $n$ vectors $\mathbf v_1,\ldots ,\mathbf v_n$ in a 
Euclidean space. Embedding them in an inner product preserving 
way in $\C_d$ means finding $n$ positive  matrices
$A_1,\ldots A_n \in \C_d$ such that
\begin{equation}
\langle \mathbf v_j, \mathbf v_k \rangle = \langle A_j, A_k\rangle \equiv 
\frac 1d{\rm Tr}(A_j A_k).
\end{equation}
Since, as was mentioned, the angle between any two vectors in $\C_d$ is 
 $\le\pi/2$, one can only hope to embed these vectors 
if $\langle \mathbf v_j,\mathbf v_k\rangle \geq 0$ for all $j,k=1,\ldots 
n$.
So suppose this condition is satisfied. Does it follow that the
given vectors can be embedded in $\C_d$ for some (possibly very large)
$d$? If not, can we characterize the configurations that can be 
embedded? To our knowledge, these questions have not been 
considered in the literature. 

We postpone the summary of our results to the next subsection and note 
that there is  a well-investigated problem --- namely the problem of 
{\it positive factorization} --- which has some relation to 
our questions. The relation between the two topics will  also be
discussed in the next subsection; here we shall only explain our original 
motivation.

If $A$ is positive and $A\neq 0$, then ${\rm Tr}(A)> 0$, so
the affine hyperplane $\{X:\,{\rm Tr}(X)=1\}$ 
intersects each ray of the cone $\C_d$ exactly once and geometric 
properties of this cone can be equally well studied by considering 
just the intersection
\begin{equation}
\S_d = \{A\in M_d(\CC): \, A\geq 0, {\rm Tr}(A)=1\}.
\end{equation}
The compact convex body $\S_d$ is usually referred to as the set of {\it 
density operators}, and it can be naturally identified with the set of 
states of $M_d(\CC)$. Many problems in quantum information theory boil
down to geometrical questions about $\S_d$. From our point of view a
relevant example is the famous open problem about {\it mutually unbiased 
bases}, which turned out \cite{polytope} to be equivalent 
to asking 
whether a certain polytope 
--- similarly to our question --- can be embedded in $\S_d$ or not. Though 
some properties of $\S_d$ have been determined (e.g.\! its volume and surface 
\cite{volume.of.states} is known), its exact ``shape'' is still little
understood.

However, rather than studying the geometry of $\C_d$ (or 
$\S_d$) for a certain $d$, here we are more interested in the ``large 
dimensional'' behaviour. Indeed, often this is what matters; 
think for example of the topic of 
{\it operator monotonuous functions}. (The ordering between self-adjoint 
elements is determined --- in some sense ---  by the geometry 
of $\C_d$, since the operator inequality $A\geq B$ simply means that 
$A-B\in\C_d$.) Indeed, saying that a certain 
function $f:\RR\to\RR$ is monotonuous
(increasing) but {\it not} operator monotonuous means that
though $f(x)\leq f(y)$ for all real $x\leq y$, there exist
two (possibly very large) self-adjoint matrices
$X\leq Y$ such that $f(X)\nleq f(Y)$.

A more direct motivation for our question 
is the {\it Connes 
embedding conjecture}. This conjecture --- in its original form --- is 
about {\it finite} von Neumann algebras; i.e.\! von Neumann algebras 
having a normal faithful tracial state. According to Connes, it should 
be always possible to embed such a von Neumann algebra into an ultrapower 
$\R^\omega$ of the hyperfinite I\!I$_1$-factor $\R$ in a trace-preserving 
way.

At first sight this might seem unrelated to our problem. However, this 
conjecture has many different (but equivalent) forms. For example, it is 
well-known that this embedding property holds if and only if moments of a 
finite set of self-adjoint elements from such a von Neumann algebra $\M$ 
can  always be ``approximated" by those of a set of (complex, self-adjoint) 
matrices. (See e.g.\!  \cite[Prop.\! 3.3]{CollinsDykema} for a proof.) 
Here by ``approximation'' we mean that for every 
$n,m\in\NN, \, \epsilon>0$ and  self-adjoint operators 
$A_1,\ldots A_n\in \M$ it is possible to find a 
$d\in \NN$ and self-adjoint matrices $X_1,\ldots ,X_n\in M_d(\CC)$ such 
that 
\begin{equation}
\big| \tau(A_{j_1}\ldots A_{j_s}) -\tau_d (X_{j_1}\ldots X_{j_s})\big| 
\leq \epsilon
\end{equation} holds for every $s\leq m$ and $j_1,\ldots j_s \in \{1,\ldots,n\}$. Here $\tau$ is a (fixed) faithful, normal, tracial state on $\M$ and 
$\tau_d = \frac{1}{d}{\rm Tr}$ is the normalized trace on $M_d(\CC)$.
Note 
that the Connes embedding conjecture can also be  reformulated in terms of 
linear inequalities between moments \cite{Hadwin}. This approach has 
recently resulted $\cite{Florin,KlepSchweighofer}$ in new forms of the 
conjecture which are in some sense similar to Hilbert's $17^{\rm th}$ 
problem and are formulated {\it entirely} in terms of moments of matrices 
(i.e. in a way which apparently does not involve von Neumann 
algebras other than those of finite matrices).

It is then natural to ask: what can we say about moments of self-adjoint 
matrices, in general? Of course, there is not too much to say about the 
first and second moments. The set of numbers $\{{\rm 
Tr}(X_j):\,j=1,\ldots, n\}$ can be any subset of $\RR$ and the only 
condition on the second moments is that the matrix $\big( {\rm Tr}(X_jX_k) 
{\big)}_{(j,k)}$ must have only real entries and must be positive, as it is  the {\it Gram matrix}
of $X_1,\ldots, X_n$ when these are viewed as vectors in a 
Euclidean space.

While the first two moments are too banal to be interesting, higher 
moments are too complicated to be fully understood. In this respect it 
seems a good way ``in between'' to study moments of the form ${\rm 
Tr}(X_j^2 X_k^2)$. Though they are $4$-moments, they can be also 
considered as $2$-moments of the {\it positive} matrices $Y_j:=X_j^2\; 
(j=1,\ldots, n)$, which again leads to our question.

\subsection{Relation to positive factorization
and main results}

There is a certain type of configuration which can be  embedded in 
the cone of positives in a trivial manner. Indeed, let $\mathbf v_1,\ldots, 
\mathbf v_n$ be vectors from the {\it positive orthant} of $\RR^d$; i.e.\! 
vectors of $\RR^d$ with only nonnegative entries (in which case of course 
$\langle \mathbf v_k,\mathbf v_l\rangle \geq 0$ follows automatically for 
all $k,l=1,\ldots, n$). Then for each $k=1,\ldots, n$ let $A_k$ be $\sqrt d$ times the 
$d\times d$ diagonal matrix whose diagonal entries are simply the entries 
of $\mathbf v_k\in \RR^d$ (listed in the same order). It is now trivial 
that the matrices $A_k$ are positive and that
$\langle A_k,A_l\rangle \equiv \frac 1d {\rm Tr}(A_kA_l) =\langle \mathbf 
v_k,\mathbf v_l\rangle$ for all $k,l=1,\ldots , n$. 

Note that in the above realization all matrices commute. Actually a 
certain converse of this remark is also true: if a vector configuration 
can be realized by commuting elements of $\C_d$, then this configuration 
can be embedded in the positive orthant  $\RR^d_{\ge 0}$. This simply follows 
from the fact that a set of positive, commuting matrices can be 
simultaneously diagonalized and that all diagonal entries of positive 
matrices must be nonnegative.

No more than $4$ vectors span at most a $4$-dimensional space and 
in \cite{gw} it is proved that if 
$\mathbf v_1,\ldots,\mathbf v_4 \in \RR^4$ are such that 
$\langle \mathbf v_k,\mathbf v_l\rangle \geq 0$ for all 
$k,l=1,\ldots ,4$ then there exists an orthogonal transformation 
$O:\RR^4\to \RR^4$ such that $O\mathbf v_1$, \dots, $O\mathbf v_4$
 all lie in the positive orthant of $\RR^4$. It follows 
at once that any configuration of $n\le 4$ vectors such that the angle between 
any two is  $\le\pi/2$ can be embedded in the
cone $\C_4$.

Another elementary observation is that {\it any} number of vectors 
$\mathbf v_1,\ldots \mathbf v_n $ spanning a no more than  
$2$-dimensional space and satisfying $\langle \mathbf v_k,\mathbf 
v_l\rangle \geq 0$ for all
$k,l=1,\ldots ,n$ can be embedded in the positive orthant of $\RR^2$
and hence also in the cone $\C_2$. Thus a configuration, with all inner products nonnegative, which does not 
have a realization in $\C_d$ for any $d\in \NN$ must consist of at least 
$5$ vectors and  must span a space of dimension $\ge 3$. 
In fact, as we shall show in the last section, there exists a configuration of $5$ vectors in 
$\RR^3$ with all pairwise inner products nonnegative
but with no realization in any 
cone $\C_d\; (d\in \NN)$. In connection to the Connes embedding 
conjecture it is interesting to note that --- as we shall explain ---
this particular configuration  cannot be embedded in the cone of 
positives of any finite von Neumann algebra.

An $n\times n$ matrix $A$ is said to have a positive factorization iff 
there exists another (possibly non-square) matrix $B$ such that all 
entries of $B$ are nonnegative reals and $A=B^T B$. A trivial necessary 
condition for the existence of a positive factorization is that $A$ must 
be positive and all entries of $A$ must be nonnegative reals. 

Now suppose $\mathbf v_1,\ldots , \mathbf v_n$ are vectors satisfying 
$\langle \mathbf v_k,\mathbf v_l\rangle \geq 0$, and consider their
Gram matrix $A$; that is, the $n\times n$ 
matrix whose $k,l$-th entry is $\langle \mathbf v_k,\mathbf 
v_l\rangle$. Then $A$ is  positive and has only nonnegative reals as
its entries. So assume $A$ has a positive factorization: $A=B^T B$ for 
some $m\times n$ matrix $B$ with only nonnegative real entries. Then a 
trivial check shows that the 
map
\begin{equation}
\mathbf v_k \mapsto {\rm the} \; k^{\rm th}\; {\textrm{column vector of }} 
B
\end{equation}
is an (inner product preserving) embedding of our vector configuration into 
the positive orthant of $\RR^m$, and hence that the configuration can be 
realized in $\C_m$. By the same argument it is also clear that the Gram 
matrix of a vector configuration has a positive factorization if and only 
if the given configuration can be embedded in the positive orthant of 
$\RR^m$ for some $m\in \NN$.
Note that it is long known \cite{posfact1,posfact2,gw, hl, bb} that an
$n\times n$ matrix $A$
with only nonnegative entries always has a 
positive factorization if $n<5$, and that 
for $n\ge 5$, as counterexamples show, the same 
 assertion fails.

However, even if the Gram matrix of a given vector configuration
does not have a positive factorization, the
 configuration might still be embeddable  into $\C_d$. In fact, in the next
section we shall give a construction showing that if there exists a vector
$\mathbf w$ such that the angle between $\mathbf w$ and $\mathbf v_k$ is
$\le \pi/4$ for all $k=1,\ldots, n$, then the 
configuration $\mathbf v_1,\ldots, \mathbf v_n$
can be embedded in $\C_{d}$ where $d=2^{\lfloor n/2\rfloor}$.
In general though, as we shall prove, such a configuration cannot be
realized in a positive orthant. By an  earlier remark this  implies
that in general such a configuration --- though it can be realized by
positive matrices --- cannot be realized by {\it commuting} positive
matrices. In fact, our embedding construction relies on the Clifford algebra, which is non-commutative.

Our results, in some sense, can be considered as first examples. Finding a 
suitable characterization of the configurations that can be realized 
by positive matrices remains an open problem.

\section{Embeddings via the Clifford algebra}
In this section we assume (without loss of generality) that the  vectors to be represented by positive matrices are given in $\RR^n$.
We prove representability if the vectors are contained in the spherical cone
\[C_n=\{\mathbf v=(x_1,\dots, x_n)\in\RR^n\mid x_1^2\ge x_2^2+\dots+x_n^2\}\]
formed by vectors whose angle with the vector $(1,0,\dots, 0)$ does not exceed $\pi/4$.

\begin{theorem}\label{cliff}
Let $d=2^{\lfloor n/2\rfloor}$. There exists an isometric real linear embedding $\phi$ of $\RR^n$ into the space of $d\times d$ self-adjoint
complex matrices that maps $C_n$ into the cone $\mathcal C_d$ of positive matrices.
\end{theorem}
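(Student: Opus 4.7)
The plan is a Clifford-algebra construction. I take $n-1$ self-adjoint $d\times d$ matrices $e_2,\dots,e_n$ satisfying the anticommutation relations $e_i e_j + e_j e_i = 2\delta_{ij}I$. These generators of a Clifford algebra can be realized explicitly in dimension $d = 2^{\lfloor n/2\rfloor}$ by tensor products of Pauli matrices on $(\CC^2)^{\otimes \lfloor n/2\rfloor}$ (the $2\lfloor n/2\rfloor$ Pauli-word generators of $\mathrm{Cl}_{2\lfloor n/2\rfloor}(\CC)\cong M_d(\CC)$ supply the required $n-1$); this classical input is essentially the only external fact the argument needs. With them in hand, I would set
\[\phi(x_1,\dots,x_n) = x_1 I + x_2 e_2 + \cdots + x_n e_n.\]

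To check that $\phi$ is an isometry, it suffices to verify that $I, e_2,\dots,e_n$ are orthonormal with respect to $\langle A,B\rangle = \tfrac{1}{d}\mathrm{Tr}(AB)$. The relations $e_i^2 = I$ handle the diagonal norms; the off-diagonal inner products vanish because anticommutativity combined with cyclicity of the trace forces $\mathrm{Tr}(e_i e_j)=0$ for $i\ne j$ and (for $n\ge 2$) $\mathrm{Tr}(e_i)=0$.

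For the cone inclusion, fix $\mathbf v = (x_1,\ldots,x_n)\in C_n$ and set $Y = \sum_{i\ge 2} x_i e_i$ together with $r = \sqrt{x_2^2+\cdots+x_n^2}$. When $Y^2$ is expanded, every cross term $x_i x_j(e_i e_j + e_j e_i)$ with $i\ne j$ vanishes by anticommutativity, and the diagonal contributions sum to $r^2 I$. Hence $Y$ is a self-adjoint matrix with $Y^2 = r^2 I$, so its spectrum is contained in $\{-r,+r\}$. Consequently $\phi(\mathbf v) = x_1 I + Y$ has spectrum in $\{x_1-r,\, x_1+r\}$, and the defining inequality $x_1 \ge r$ of $C_n$ is exactly what is needed for both eigenvalues to be nonnegative, i.e., for $\phi(\mathbf v)\in \C_d$.

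There is no substantive obstacle here: once the Clifford generators are in place, the isometry claim reduces to the vanishing of traces of odd monomials in the $e_i$, and the positivity claim reduces to the collapse $Y^2 = r^2 I$. The only point requiring any care is matching the count of generators to the stated dimension, which the bound $2\lfloor n/2\rfloor \ge n-1$ trivially accommodates.
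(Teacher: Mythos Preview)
Your proof is correct and follows essentially the same route as the paper: both send the first coordinate to $I$ and the remaining $n-1$ coordinates to self-adjoint Clifford generators, then use the anticommutation relations to verify the isometry (via tracelessness of $e_i$ and $e_ie_j$) and positivity (via $Y^2=r^2I$). The only cosmetic difference is that the paper realizes the generators explicitly as the fermionic field operators $\epsilon_{\mathbf v}+\epsilon_{\mathbf v}^*$ and $\sqrt{-1}(\epsilon_{\mathbf w}-\epsilon_{\mathbf w}^*)$ on the exterior algebra $\bigwedge\CC^k$, whereas you invoke the equivalent Pauli-tensor construction; the subsequent computations are identical.
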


\begin{proof}
First assume that $n=2k+1$ is odd. Then $d=2^k$.  We identify \[\RR^n=\RR\oplus \RR^k\oplus \RR^k\]
and
\[M_d(\CC)= {\mathrm {End}}_\CC\left(\bigwedge \CC^k\right)=\CC\otimes {\mathrm {End}}_\RR\left(\bigwedge \RR^k\right),\]
where the space of anti-symmetric tensors is endowed with the inner product
\[\left\langle\bigwedge\mathbf v_i,\bigwedge\mathbf w_j\right\rangle=\det\left(\langle\mathbf v_i,\mathbf w_j\rangle\right).\]
When $\mathbf v\in\RR^k$, we write $\epsilon_{\mathbf v}\in{\mathrm {End}}\left(\bigwedge \RR^k\right)$ for the map $\mathbf u\mapsto \mathbf v\wedge \mathbf u$, where $\mathbf u$ is any anti-symmetric tensor. 
We use the anticommutator notation $\{a,b\}=ab+ba$. It is well known that
\[{\rm Tr}\; \epsilon_{\mathbf v}=
{\rm Tr} \;\epsilon^*_{\mathbf v}=0,\]  \[ \{\epsilon_{\mathbf v},\epsilon_{\mathbf w}\}=
\{\epsilon^*_{\mathbf v},\epsilon^*_{\mathbf w}\}=0\]
and
\[\{\epsilon_{\mathbf v},\epsilon^*_{\mathbf w}\}=\langle \mathbf v,\mathbf w\rangle I.\]
We define
\[\phi: c\oplus \mathbf v\oplus\mathbf  w\mapsto cI+\epsilon_{\mathbf v}+\epsilon^*_{\mathbf v}+\sqrt{-1}(\epsilon_{\mathbf w}-\epsilon^*_{\mathbf w}).\]
This clearly maps $\RR^n$ to self-adjoint matrices in a linear way.
Using the above anticommutation relations, we have
\[\phi(c\oplus\mathbf v\oplus\mathbf w)^2=c^2 I+\langle \mathbf v,\mathbf v\rangle I
+\langle \mathbf w,\mathbf w\rangle I+2c\left(\epsilon_{\mathbf v}+\epsilon^*_{\mathbf v}+\sqrt{-1}(\epsilon_{\mathbf w}-\epsilon^*_{\mathbf w})\right).\]
We deduce that $\phi$ is an isometry since
\begin{align*}
|\phi(c\oplus\mathbf v\oplus\mathbf w)|^2=\frac 1d{\rm Tr}\left(\phi(c\oplus\mathbf v\oplus\mathbf w)^2\right)=\\=\frac
1d{\rm Tr}\left(c^2 I+\langle \mathbf v,\mathbf v\rangle I
+\langle \mathbf w,\mathbf w\rangle I\right)=|
c\oplus\mathbf v\oplus\mathbf w|^2.\end{align*}
Now assume that 
$c\oplus\mathbf v\oplus\mathbf w\in C_n$, i.e.,  $c^2\ge|\mathbf v|^2+
|\mathbf w|^2$.
We have
\[\phi(c\oplus\mathbf v\oplus\mathbf w)=cI+\phi(0\oplus\mathbf v\oplus\mathbf w).\]
Here the last term is a self-adjoint matrix that squares to
$\left(|\mathbf v|^2+|\mathbf w|^2\right)I$, so its operator norm equals its Euclidean norm, both being the common absolute value of all its eigenvalues, namely
$\sqrt{|\mathbf v|^2+|\mathbf w|^2}\le c$.
Thus, $\phi(c\oplus\mathbf v\oplus\mathbf w)$ is  a positive matrix, which finishes the proof for $n$ odd.

Now assume that $n=2k$ is even.  We still have $d=2^k$, and there is an obvious isometric embedding $\RR^n\to\RR^n\oplus \RR=\RR^{n+1}$ that maps $C_n$ into $C_{n+1}$, so the problem is reduced to the odd-dimensional case.
\end{proof}

As a contrast,
we shall now construct a configuration of six vectors in the circular cone $C_3\subset\RR^3$ that cannot be isometrically embedded
into the positive orthant $\RR^d_{\ge 0}$ for any $d$.
 
Put
\[\mathbf v_k=\left(1,\cos\frac {2\pi k}6,\sin \frac {2\pi k}6\right)\in C_3\subset\RR^3\qquad (k\in\ZZ/6\ZZ).\]

\begin{theorem}\label{hexa}
It is impossible to have positive vectors $\mathbf a_0,\dots, \mathbf a_5\in\RR^d_{\ge 0}$ with \begin{equation}\label{isom6}
\langle\mathbf a_j,\mathbf a_k\rangle=\langle\mathbf v_j,\mathbf v_k\rangle {\textrm{ for all }} j,k\in\ZZ/6\ZZ.\end{equation}
\end{theorem}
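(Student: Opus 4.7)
The plan is to exploit a linear relation forced by the $\mathbf v_k$ lying in $\RR^3$, combined with the vanishing inner products $\langle\mathbf v_k,\mathbf v_{k+3}\rangle=0$, to rigidify any hypothetical positive-orthant realization coordinate by coordinate, and then derive a contradiction from an elementary sign count.

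The first observation is that $\mathbf v_k+\mathbf v_{k+3}=(2,0,0)$ independent of $k\in\{0,1,2\}$. Since the embedding preserves squared norms of all linear combinations, this lifts to $\mathbf a_0+\mathbf a_3=\mathbf a_1+\mathbf a_4=\mathbf a_2+\mathbf a_5=:\mathbf s\in\RR_{\ge 0}^d$ with $|\mathbf s|^2=4$. The second observation is that two nonnegative vectors with vanishing inner product have disjoint support; combining $\langle\mathbf a_j,\mathbf a_{j+3}\rangle=0$ with $\mathbf a_j+\mathbf a_{j+3}=\mathbf s$ therefore forces, at each coordinate $i$, one of $(\mathbf a_j)_i,(\mathbf a_{j+3})_i$ to equal $s_i$ and the other to vanish.

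To extract numerical consequences I would set $\mathbf r_j=\mathbf a_j-\mathbf s/2$, so that $\mathbf r_{j+3}=-\mathbf r_j$ and $(\mathbf r_j)_i=(s_i/2)\chi_j^{(i)}$ for some sign $\chi_j^{(i)}\in\{\pm 1\}$. A short calculation using $\langle\mathbf s,\mathbf a_j\rangle=\langle\mathbf v_0,\mathbf v_j\rangle+\langle\mathbf v_3,\mathbf v_j\rangle=2=|\mathbf s|^2/2$ gives $\langle\mathbf s,\mathbf r_j\rangle=0$ and hence $\langle\mathbf r_j,\mathbf r_k\rangle=\langle\mathbf v_j,\mathbf v_k\rangle-1$. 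The three off-diagonal Gram entries among $\mathbf v_0,\mathbf v_1,\mathbf v_2$ then translate into
\[\sum_i s_i^2\,\chi_0^{(i)}\chi_1^{(i)}=2,\quad \sum_i s_i^2\,\chi_0^{(i)}\chi_2^{(i)}=-2,\quad \sum_i s_i^2\,\chi_1^{(i)}\chi_2^{(i)}=2,\]
alongside $\sum_i s_i^2=4$.

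From the third identity and $\sum_i s_i^2=4$ one gets $\sum_{i:\chi_1^{(i)}\ne\chi_2^{(i)}}s_i^2=1$, while subtracting the second identity from the first yields
\[4=\sum_i s_i^2\,\chi_0^{(i)}\bigl(\chi_1^{(i)}-\chi_2^{(i)}\bigr),\]
whose absolute value is bounded by $\sum_i s_i^2\,\bigl|\chi_1^{(i)}-\chi_2^{(i)}\bigr|=2\sum_{i:\chi_1^{(i)}\ne\chi_2^{(i)}}s_i^2=2$, a contradiction. The main obstacle -- and the key idea -- is the rigidity step: recognizing that the antipodal orthogonalities together with the $\RR^3$-relation collapse all coordinatewise freedom into binary choices $\chi_j^{(i)}$, after which everything reduces to a one-line magnitude inequality.
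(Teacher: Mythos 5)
Your proof is correct, and its core coincides with the paper's: both center the configuration at the common midpoint (your $\mathbf s/2$ is the paper's $\mathbf e$) and use the orthogonality $\langle\mathbf a_j,\mathbf a_{j+3}\rangle=0$ together with nonnegativity to conclude that each residual $\mathbf r_j=\mathbf a_j-\mathbf s/2$ is, coordinatewise, $\pm s_i/2$, i.e.\ a sign pattern applied to the fixed vector $\mathbf s/2$. (Your route to this rigidity via disjoint supports is a touch more direct than the paper's, which deduces $|(\mathbf b_k)_i|=e_i$ from $|\mathbf b_k|=|\mathbf e|$ and $\mathbf e\pm\mathbf b_k\ge 0$.) Where you genuinely diverge is the endgame: the paper invokes the extra linear relation $\mathbf v_0+\mathbf v_2+\mathbf v_4=3(1,0,0)$, which forces $\mathbf r_0+\mathbf r_2+\mathbf r_4=\mathbf 0$ and then kills $\mathbf s$ by a parity argument (a sum of three signs is odd, hence nonzero, in every coordinate), whereas you use only the pairwise Gram entries among $\mathbf v_0,\mathbf v_1,\mathbf v_2$, turning them into the three weighted sign identities and deriving a contradiction from the bound $|\sum_i s_i^2\chi_0^{(i)}(\chi_1^{(i)}-\chi_2^{(i)})|\le 2\sum_{i:\chi_1^{(i)}\ne\chi_2^{(i)}}s_i^2$. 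The paper's parity step is arguably more robust (it works whenever three of the residual sign vectors must sum to zero against a nonzero $\mathbf e$, with no dependence on the precise inner-product values), while your finish trades that structural input for an elementary quantitative estimate using only half of the hexagon; both are complete and of comparable length, and your computations ($\sum_i s_i^2=4$, the values $2,-2,2$, and the resulting $4\le 2$ contradiction) all check out.
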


\begin{proof}
By contradiction, suppose that the positive vectors $\mathbf a_0,\dots, \mathbf a_5\in\RR^d_{\ge 0}$  satisfy \eqref{isom6}.
Put
\[\mathbf e=\frac{\mathbf a_k+\mathbf a_{k+3}}2;\] this is independent of $k$ because the analogous statement holds for the $\mathbf v_k$.
Put \[\mathbf b_k=\mathbf a_k-\mathbf e=\mathbf e-\mathbf a_{k+3}.\]
Then $|\mathbf b_k|=|\mathbf e|$ and $\mathbf e\pm\mathbf b_k\in\RR^d_{\ge 0}$, so $\mathbf b_k=\mathbf s_k\circ \mathbf e$ (coordinatewise product) for a suitable vector of signs $\mathbf s_k\in\{-1,1\}^d$.  But $\mathbf b_0+\mathbf b_2+\mathbf b_4=\mathbf 0$, so
$(\mathbf s_0+\mathbf s_2+\mathbf s_4)\circ \mathbf e=\mathbf 0$,
which is impossible because every coordinate of $\mathbf s_0+\mathbf s_2+\mathbf s_4$ is an odd integer and $\mathbf e\ne\mathbf 0$.
\end{proof}

\section{A configuration which cannot be realized}
Put
\[\mathbf v_k=\left(\sqrt{\cos\frac\pi 5},\cos\frac {2\pi k}5,\sin \frac {2\pi k}5\right)\in\RR^3\qquad (k\in\ZZ/5\ZZ).\]
Observe that
\[\langle \mathbf v_k,\mathbf v_{k+1}\rangle=\cos(\pi/5)+\cos(2\pi/5)>0\]
and
\[\langle \mathbf v_k,\mathbf v_{k+2}\rangle=\cos(\pi/5)+\cos(4\pi/5)=0,\]
so $\langle\mathbf v_j,\mathbf v_k\rangle\ge 0$ for all $j,k\in \ZZ/5\ZZ$.

Let $\M$ be a  von Neumann algebra with a  fixed 
faithful trace $\tau:\M\to\CC$. 
(The simplest example is $\M=M_d(\CC)$ with $\tau$ being (a constant multiple of) the ordinary trace.)
\begin{theorem}\label{penta}
It is impossible to have positive elements $A_0,\dots, A_4\in\M$ with \begin{equation}\label{isom}\tau(A_jA_k)=\langle\mathbf v_j,\mathbf v_k\rangle {\textrm{ for all }} j,k\in\ZZ/5\ZZ.\end{equation}
\end{theorem}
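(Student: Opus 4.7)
The approach is to leverage the zero inner products and the three-dimensionality of the ambient space to derive, from positivity alone, rigid structural identities on the $A_k$, and then reach a numerical contradiction. First, the orthogonalities $\langle\mathbf v_k,\mathbf v_{k+2}\rangle=0$, combined with positivity of the $A_j$ and faithfulness of $\tau$, force $A_kA_{k+2}=A_{k+2}A_k=0$, since $\tau(A_k^{1/2}A_{k+2}A_k^{1/2})=\tau(A_kA_{k+2})=0$ and the integrand is positive. Second, the five vectors $\mathbf v_k$ span only a three-dimensional space and so satisfy a two-dimensional space of linear relations; a Fourier calculation on $\ZZ/5\ZZ$ gives $\sum_k\cos(4\pi k/5)\mathbf v_k=0$. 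Since $\tau((\sum_kc_kA_k)^2)=|\sum_kc_k\mathbf v_k|^2$, faithfulness of $\tau$ promotes this to an operator identity: writing $\alpha=\cos(\pi/5)$ and $\beta=\cos(2\pi/5)$,
\[A_k=\alpha(A_{k-1}+A_{k+1})-\beta(A_{k-2}+A_{k+2})\qquad(k\in\ZZ/5\ZZ).\]

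Next I would show $[A_k,S]=0$ where $S:=\sum_jA_j$. With $S_0^+=A_1+A_4$ and $S_0^-=A_2+A_3$, the vanishings $A_0S_0^-=S_0^-A_0=0$ (from $A_0A_2=A_0A_3=0$ and their adjoints), inserted into $A_0=\alpha S_0^+-\beta S_0^-$, give $\alpha S_0^+S_0^-=\alpha S_0^-S_0^+=\beta(S_0^-)^2$. Hence $[S_0^+,S_0^-]=0$ and then $[A_0,S_0^\pm]=0$, so $A_0$ commutes with $S=A_0+S_0^++S_0^-$. Cyclically, $[A_k,S]=0$ for every $k$.

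Positivity now enters decisively. Solving the linear relation, $(\alpha+\beta)S_k^-=\alpha S-(\alpha+1)A_k\ge 0$, so $A_k\le\frac{\alpha}{\alpha+1}S$. The commuting positive operators $A_k$ and $X:=\frac{\alpha}{\alpha+1}S-A_k$ satisfy $\tau(A_kX)\ge 0$; computed with $\tau(A_k^2)=1+\alpha$ and $\tau(A_kS)=\langle\mathbf v_k,\sum_j\mathbf v_j\rangle=5\alpha$, this reduces to $5\alpha^2\ge(1+\alpha)^2$, i.e.\ $4\alpha^2-2\alpha-1\ge 0$ --- but $4\alpha^2-2\alpha-1=0$ is exactly the relation that makes $2\alpha=2\cos(\pi/5)$ the golden ratio, so equality holds. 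By faithfulness of $\tau$, $A_kX=0$, i.e.\ $A_k^2=\frac{\alpha}{\alpha+1}A_kS$. Letting $P_k$ be the support projection of $A_k$ and using $[A_k,S]=0$, this becomes the rigid formula $A_k=\frac{1}{\sqrt 5}P_kS$ with $P_k$ commuting with $S$; summing over $k$ and restricting to the support $P$ of $S$ yields $\sum_kP_k=\sqrt 5\,P$.

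The contradiction is now immediate. Since $A_jA_k=0$ for $|j-k|\equiv 2\pmod 5$, the supports satisfy $P_jP_k=0$. Multiplying $\sum_kP_k=\sqrt 5\,P$ on the left by $P_0$ gives $P_0P_1+P_0P_4=(\sqrt 5-1)P_0$; multiplying the result on the right by $P_1$ and using $P_4P_1=0$ forces $(2-\sqrt 5)P_0P_1=0$, so $P_0P_1=0$. But then $A_0A_1=\frac{1}{5}P_0P_1\,S^2=0$, contradicting $\tau(A_0A_1)=\alpha+\beta>0$. The main obstacle I anticipate is the third step: recognising that Cauchy--Schwarz is exactly tight --- a numerical coincidence tied to the specific constant $\sqrt{\cos(\pi/5)}$ in the first coordinate of $\mathbf v_k$ --- and squeezing out of the equality case the rigid formula $A_k=\frac{1}{\sqrt 5}P_kS$; once that is in hand, the endgame via support projections is a short finite computation.
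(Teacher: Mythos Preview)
Your proof is correct, but it takes a substantially longer route than the paper's.

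Both arguments open identically: from $\langle\mathbf v_k,\mathbf v_{k\pm 2}\rangle=0$, positivity, and faithfulness of $\tau$ one gets $A_kA_{k\pm 2}=0$, and both then lift linear relations among the $\mathbf v_k$ to the $A_k$ via the isometry. The divergence is in \emph{which} relation is used. The paper simply notes that $\mathbf v_k,\mathbf v_{k+2},\mathbf v_{k-2}$ form a basis of $\RR^3$, so $A_{k\pm 1}\in\RR A_k+\RR A_{k+2}+\RR A_{k-2}$; multiplying on either side by $A_k$ annihilates the last two terms, giving $A_kA_{k\pm 1}=A_{k\pm 1}A_k\in\RR A_k^2$. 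One then reads off $\RR A_0^2=\RR A_1A_0=\RR A_1^2$, and uniqueness of positive square roots forces $\RR A_0=\RR A_1$, contradicting $\RR\mathbf v_0\ne\RR\mathbf v_1$. That is the entire argument.

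Your approach instead uses the cyclically symmetric Fourier relation $A_k=\alpha(A_{k-1}+A_{k+1})-\beta(A_{k-2}+A_{k+2})$, derives $[A_k,S]=0$, and exploits the golden-ratio identity $4\alpha^2-2\alpha-1=0$ to turn the inequality $A_k\le\frac{\alpha}{1+\alpha}S$ into the rigid equality $A_k=\frac{1}{\sqrt 5}P_kS$ via the faithfulness of $\tau$; the contradiction then comes from a short projection calculation. Every step checks out, including the key numerical coincidence $\frac{\alpha}{1+\alpha}=\frac{1}{\sqrt 5}$ and the passage from $A_kX=0$ to $A_k=\frac{1}{\sqrt 5}P_kS$. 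What you gain is a detailed structural picture (all $A_k$ commute with $S$ and are spectral slices of it); what the paper's argument buys is brevity and independence from the specific constant $\sqrt{\cos(\pi/5)}$ in the first coordinate --- its contradiction would go through for any configuration with the same orthogonality pattern and three-dimensional span.
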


\begin{proof}
By contradiction, suppose that the positive elements $A_0,\dots, A_4\in\M$ satisfy \eqref{isom}.
Then
\[\tau(\sqrt{A_k}A_{k\pm 2}\sqrt{A_k})=\tau(A_kA_{k\pm 2})=\langle\mathbf v_k,\mathbf v_{k\pm 2}\rangle=0,\]
but
\[\sqrt{A_k}A_{k\pm 2}\sqrt{A_k}=\sqrt{A_k}\sqrt{A_{k\pm 2}}\left(\sqrt{A_k}\sqrt{A_{k\pm 2}}\right)^*\]
is positive, so it is zero (since $\tau$ is faithful).
Thus, $\sqrt{A_k}\sqrt{A_{k\pm 2}}=0$ and so
\[{A_k}{A_{k\pm 2}}=\sqrt{A_k}\sqrt{A_k}\sqrt{A_{k\pm 2}}\sqrt{A_{k\pm 2}}=0.\]

Now observe that $\mathbf v_k$, $\mathbf v_{k+2}$ and  $\mathbf v_{k-2}$ 
form a basis of $\RR^3$, in particular, we have
\[\mathbf v_{k\pm 1}\in\RR \mathbf v_k+\RR \mathbf v_{k+2}+\RR\mathbf v_{k-2}.\]
Since the mapping $v_k\mapsto A_k$ preserves inner products, it follows that
\[ A_{k\pm 1}\in\RR A_k+\RR A_{k+2}+\RR A_{k-2}.\]
Multiplying on either side by $A_k$, we get that \[0\ne A_kA_{k\pm 1}=A_{k\pm 1}A_k\in \RR A_k^2.\]
Note that the product here is nonzero because its trace is strictly positive. We deduce
\[\RR A_0^2=\RR A_1A_0=\RR A_1^2,\]
but the $A_k$ are positive, and the positive square root of a  positive operator is unique, so this implies $\RR A_0=\RR A_1$. 
We have an isometry $\mathbf v_k\mapsto A_k$, so then
$\RR\mathbf v_0=\RR \mathbf v_1$, a contradiction.
\end{proof}

\bigskip

\noindent
{\bf Acknowledgement.} We thank Florin R\u{a}dulescu for the suggestion 
of the problem.

\end{document}